\newtheorem{theorem}{Theorem}[section]
\newtheorem{lemma}[theorem]{Lemma}
\newtheorem{proposition}[theorem]{Proposition}
\newtheorem*{remark}{Remark}
\DeclareRobustCommand{\rvdots}{%
	\vbox{
		\baselineskip4\p@\lineskiplimit\z@
		\kern-\p@
		\hbox{.}\hbox{.}\hbox{.}
}}
\def\footnoterule{\relax%
	\kern-5pt
	\hbox to \columnwidth{\hfill\vrule width .9\columnwidth height 0.4pt\hfill}
	\kern4.6pt}
\definecolor{darkblue}{rgb}{0.0,0.0,0.6}
\title{On Risk-Sensitive
Decision  \\ Making Under Uncertainty}
\author{\large Chung-Han Hsieh,$^{*}$ \textit{Member, IEEE} and Yi-Shan Wong${}^{**}$
	\thanks{This paper is partially supported by the Ministry of Science and Technology~(MOST), Taiwan. under Grants:  MOST111-2221-E-007-124-.\\ ${}^*$Chung-Han Hsieh is with the Department of Quantitative Finance, National Tsing Hua University, Hsinchu 300044, Taiwan. E-mail: \href{mailto: ch.hsieh@mx.nthu.edu.tw}{ch.hsieh@mx.nthu.edu.tw}. } 
	\thanks{\hskip -10pt ${}^{**}$Yi-Shan Wong was with the Department of Quantitative Finance, National Tsing Hua University, Hsinchu 300044, and is currently with the Actuarial Department I, Cathay Life Insurance, Taipei 106436, Taiwan. (E-mail: \href{matilto: yishan.wong13@gmail.com}{yishan.wong13@gmail.com}).}
}
\begin{document}

	\maketitle
	\thispagestyle{empty}
	\pagestyle{empty}
	
	
\begin{abstract} 
    This paper studies a {\em risk-sensitive} decision-making problem under uncertainty. It considers a decision-making process that unfolds over a fixed number of stages, in which a decision-maker chooses among multiple alternatives, some of which are deterministic and others are stochastic. The decision-maker's cumulative value is updated at each stage, reflecting the outcomes of the chosen alternatives. After formulating this as a stochastic control problem, we delineate the necessary optimality conditions for it. Two illustrative examples from optimal betting and inventory management are provided to support our theory.
\end{abstract}
	
\vspace{3mm}
\begin{IEEEkeywords}
Stochastic Systems, Decision Making Under Uncertainty, Risk-Sensitive Optimization, Variance Regularization. 
\end{IEEEkeywords}
	
	
\section{Introduction}\label{section: introduction}
Optimal decision-making under uncertainty is critical across various disciplines, including engineering, management, economics, and finance. The standard approach to addressing this issue is by applying expected utility theory, which formulates the problem as stochastic programming; see \cite{shapiro2021lectures, bertsekas2012dynamic}.

Early paradigms for this type of problem include expected utility theory in \cite{von2007theory}, the Markowitz mean-variance portfolio optimization \cite{Markowitz_1952},  Kelly's criterion \cite{Kelly_1956, thorp2006kelly}, and the prospect theory~\cite{kahneman1979prospect}. Subsequent studies have explored different aspects of this topic. For instance, \cite{kuhn2010analysis} analyzed the rebalancing frequency in log-optimal portfolio selection, \cite{wu2020analysis} examined the Kelly betting problem in a finite stage framework. Studies by \cite{luenberger1993preference, luenberger2013investment} investigated the foundations of using Expected Logarithmic Growth (ELG) and the variance of logarithmic growth as preferences in portfolio management problems. 

Further studies by \cite{algoet1988asymptotic, cover2006elements} delved into ELG portfolios and derived various optimalities. More recently, \cite{hsieh2020necessary, hsieh2023asymptotic} explored the frequency-dependent log-optimal portfolio and its asymptotic log-optimality. Then
\cite{wong2023frequency} studied the frequency-dependent log-optimal portfolio problems with costs. A nonlinear control theoretic approach is considered in \cite{proskurnikov2023benefit}.
However, these studies largely omitted an explicit focus on \textit{risk} within the optimization goal.

To this end, this paper studies a class of \textit{risk-sensitive} decision-making problems under uncertainty. The objective is composed of expected log-growth and variance of log-growth of the decision maker's account.
We consider a decision-making process that unfolds over a fixed number of stages, where a decision-maker chooses among multiple alternatives, some of which are deterministic and others are stochastic. The decision-maker's accumulated value is updated at each stage based on the chosen alternatives. This problem is modeled as a stochastic control problem, aimed at maximizing a risk-sensitive objective function. 
Furthermore, the paper provides the necessary optimality conditions for this problem.

 The rest of the paper is organized as follows. Section~\ref{section: problem formulation} provides the problem formulation for the risk-sensitive decision-making problem. Section~\ref{section: main results} first presents an equivalent optimization problem.
Then we provide rigorous proof for the necessary condition for the optimality of the problem. Subsequently, Section~\ref{section: Illustrative Examples} illustrates the optimality result with several numerical examples. Finally, Section~\ref{section: Concluding Remarks} provides concluding remarks and outlines potential future research directions.

\section{Problem Formulation}\label{section: problem formulation}	
Consider a decision maker engaged in a decision-making process that unfolds over a fixed number of stages, denoted by $N>1$. 
For $k=0,1,\dots, N-1$, a decision maker chooses among~$m \geq 2$ alternatives, with one being a deterministic option with a nonnegative payoff $r(k) \geq 0.$ That is, the outcome is certain and is treated as a degenerate random variable with value $r(k)$ for all $k$ with probability one. 
Alternatively, if the~$i$th alternative is stochastic, we take~$X_i(k)$ as the random payoffs.
We assume that the payoff vectors~$
X(k):=\left[ X_1(k), \, X_2(k),\,  \dots, \,X_m(k) \right]^\top 
$
are i.i.d. and have a known distribution and have components $X_i(\cdot)$ which can be arbitrarily correlated. 
These vectors have components satisfying
$
X_{\min,i}  \leq X_i(k) \leq X_{\max,i}
$
with predefined limits, where $-1 < X_{\min,i} < 0 < X_{\max,i} < 1$.

\subsection{Notation}
    Throughout the paper, we take $\mathbb{R}$ to be the set of real numbers and $\mathbb{R}_{+}$ to be the set of nonnegative real numbers.
    We denote by $\textbf{1}$ the vector with all components to be equal to~$1$ and denote by $ e_i \in \mathbb{R}^m $ to be the unit vector having~1 at the~$i$th component.
    All random objects are defined in a probability space~$(\Omega, \mathcal{F}, \mathbb{P})$ with $\Omega$ being the sample space,~$\mathcal{F}$ being the information set, and $\mathbb{P}$ being the probability measure. 
    For~$a,b \in \mathbb{R}^n$, the term $\langle a, b \rangle$ means the standard inner product~$a^\top b$.

\subsection{Linear Decision Policy}
Take $V(k)$ to be the decision maker's accumulated value and the $i$th feedback gain 
	$
    K_i(k) \in [0, 1]
	$
represents the fraction of the account allocated to the~$i$th asset for~$i=1,\dots,m$. 
Said another way,  the~$i$th controller is  a linear feedback of the form~$
	u_i(k) = K_i(k) V(k).
	$
For each $k$, we take $K_i(k) := K_i$ and impose a unit simplex constraint
	$$
	K \in {\mathcal K} := \left\{K \in \mathbb{R}^{m}: K_i \geq 0 \text{ for all $i$}, \; K^\top \textbf{1} = 1 \right\}
	$$
 which is readily verified as a convex set.

\subsection{Value Dynamics} 
    Letting $n \geq 1$ be the number of steps between decisions,  at time~{$k=0$}, the decision-maker begins with initial account~$V_0 := V(0)$ and initial investment control~{$
	u(0) = \sum_{i=1}^m K_i V_0
	$}
	and waits~$n$ steps. Then, when~{$k=n$}, the control is updated to be~$
		u(n) = \sum_{i=1}^m K_i V(n).
		$

The decision-maker starts with an initial accumulated value~$V(0)>0$. At each stage, the decision-maker chooses to allocate a fraction of the accumulated value to each of the~$m$ alternatives. This allocation is represented by the control gain~$K$, where each $K_i$ is the fraction of the accumulated value allocated to the $i$th alternative.
For decision period~$n \geq 1$, the corresponding account value at stage $n$ is described by the stochastic recursion
	$
	V(n) =  \langle K, \mathcal{R}_n \rangle V_0 
	$
where $\langle K, \mathcal{R}_n \rangle$ represents the total return from all alternatives at stage $n$ with the random vector $\mathcal{R}_n$ having the $i$th component 
	$$
	\mathcal{R}_{n,i} := \prod_{k=0}^{n-1} (1+X_i(k)) 
	$$
 for $i=1,2, \ldots, m$.
It is readily seen that~{$\mathcal{R}_{n,i} > 0$} for all~$n \geq 1$.
In the sequel, we may sometimes write $V(n; K)$ to emphasize the dependence on the feedback gain $K$.

\subsection{Risk-Sensitive Decision Making Problem}
For any decision period~$n \geq 1$, we consider a  \textit{risk-sensitive} objective function,\footnote{In finance, a utility function $U$ is said to be \textit{risk-averse} if it is concave, continuous, and strictly increasing; see \cite{luenberger2013investment}. } see \cite{luenberger1993preference}, as follows:
Take
\begin{align} \label{eq: risk-sensitive objective}
	{U_n^\rho}(K; X) 
	&:= \frac{1}{n} \mathbb{E}\left[ \log \frac{V(n; K)}{V_0} \right] - \frac{\rho}{2n^2}\text{var}\left( \log \frac{V(n; K)}{V_0} \right)
\end{align}
where $\rho \geq 0$ is the \textit{risk-aversion} constant, which modulates the degree of risk sensitivity, with higher values of $\rho$ indicating a greater aversion to risk.
Our goal is to solve the following stochastic control problem parameterized by feedback gain $K$:
\begin{align} \label{problem: stochastic optimal control problem}
    \begin{aligned}
        &\sup_{K \in \mathcal{K}}\; U_n^\rho (K; X)\\
        &{\rm s.t. } \; V(n) = \langle K, \mathcal{R}_n \rangle V_0 
    \end{aligned}
\end{align}
with $\mathcal{R}_n = [\mathcal{R}_{n,1}, \dots, \mathcal{R}_{n,m}]^\top$ with $ \mathcal{R}_{n,i} = \prod_{k=0}^{n - 1} ( 1 + X_i(k) ) $ for $i \in \{1, 2, \dots, m \}$.
That is, the decision-maker must balance the expected logarithmic return against the risk, represented by the variance of the logarithmic return.

\begin{remark}\rm
    It should be noted that Problem~\eqref{problem: stochastic optimal control problem} may be nonconvex since the log-variance quantity may not be convex. However, if the risk-aversion constant~$\rho =0$, then Problem~\eqref{problem: stochastic optimal control problem} reduces to the classical expected log-growth maximization problem, which generally forms a concave program; see \cite{maclean2011kelly,hsieh2023asymptotic}. Later in Section~\ref{section: Illustrative Examples}, we shall demonstrate some cases where the log-variance is indeed convex.
\end{remark}

\section{Main Results}\label{section: main results}
This section first presents an equivalent problem and then provides the necessary conditions for the optimality of the problem.
 Indeed, the next lemma simplifies Problem~\eqref{problem: stochastic optimal control problem}.

\begin{lemma} \label{lemma: equivalent risk-sensitive problem}
For $n\geq 1$,
    the risk-sensitive objective function~$U_n^\rho(K; X)$   satisfies
    \begin{align*}
	{U_n^\rho}(K; X) 
	&= \frac{1}{n}\mathbb{E}\left[ \log \langle K, \mathcal{R}_n \rangle  \right] - \frac{\rho}{2n^2}\mathbb{E}\left[ \left( \log\langle K, \mathcal{R}_n \rangle  \right)^2 \right] \\
	&\qquad + \frac{\rho}{2n^2}\left( \mathbb{E}\left[ \log\langle K, \mathcal{R}_n \rangle  \right] \right)^2.
\end{align*} 
\end{lemma}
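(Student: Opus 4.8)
The plan is to exploit the explicit value dynamics to reduce the entire statement to an elementary algebraic identity for the variance. First I would observe that the constraint in Problem~\eqref{problem: stochastic optimal control problem} gives $V(n;K) = \langle K, \mathcal{R}_n\rangle V_0$, so that the normalized growth factor is simply $V(n;K)/V_0 = \langle K, \mathcal{R}_n\rangle$, independent of the initial wealth $V_0$. Taking logarithms, the random log-growth appearing in~\eqref{eq: risk-sensitive objective} is $\log(V(n;K)/V_0) = \log\langle K, \mathcal{R}_n\rangle$. Substituting this into the first (expectation) term of the definition immediately reproduces $\tfrac{1}{n}\mathbb{E}[\log\langle K, \mathcal{R}_n\rangle]$.

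Next I would rewrite the variance penalty using the elementary identity $\text{var}(Y) = \mathbb{E}[Y^2] - (\mathbb{E}[Y])^2$ applied to the bounded random variable $Y := \log\langle K, \mathcal{R}_n\rangle$. This splits the single variance term into the two displayed quantities: the second moment $\mathbb{E}[(\log\langle K, \mathcal{R}_n\rangle)^2]$ carrying the coefficient $-\rho/(2n^2)$, and the squared mean $(\mathbb{E}[\log\langle K, \mathcal{R}_n\rangle])^2$, which, after distributing the minus sign against $-\rho/(2n^2)$, contributes the final $+\rho/(2n^2)$ term. Collecting the three terms yields exactly the claimed expression.

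Since the argument is a substitution followed by the variance decomposition, there is no genuine analytic obstacle; the only point worth a word of justification is that all the moments appearing are finite, so that $\text{var}(\cdot)$ and $\mathbb{E}[(\cdot)^2]$ are well defined and the split is legitimate. This I would settle by invoking the standing boundedness hypothesis $-1 < X_{\min,i} < 0 < X_{\max,i} < 1$: it forces each factor $1+X_i(k)$ to lie in a fixed interval strictly inside $(0,\infty)$, whence $\mathcal{R}_{n,i} = \prod_{k=0}^{n-1}(1+X_i(k))$ satisfies $(1+X_{\min,i})^n \le \mathcal{R}_{n,i} \le (1+X_{\max,i})^n$. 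Because $K_i \ge 0$ and $K^\top \textbf{1} = 1$ make $\langle K, \mathcal{R}_n\rangle$ a convex combination of the $\mathcal{R}_{n,i}$, it is bounded above and bounded away from zero uniformly over $K \in \mathcal{K}$; hence $\log\langle K, \mathcal{R}_n\rangle$ is a bounded random variable and all its moments exist. With integrability secured, the identity is exact, completing the proof.
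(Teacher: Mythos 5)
Your proposal is correct and follows essentially the same route as the paper's proof: substitute $V(n;K)/V_0 = \langle K, \mathcal{R}_n\rangle$ into the definition of $U_n^\rho$ and expand the variance via $\mathrm{var}(Y) = \mathbb{E}[Y^2] - (\mathbb{E}[Y])^2$. Your added remark on integrability (boundedness of $\log\langle K, \mathcal{R}_n\rangle$ via the payoff bounds and the simplex constraint) is a small rigor bonus the paper leaves implicit, but it does not change the argument.
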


\begin{proof}
We shall proceed with a straightforward calculation.
Note that
    \begin{align*}
	{U_n^\rho}(K; X) 
	&= \frac{1}{n} \mathbb{E}\left[ \log \frac{V(n)}{V(0)} \right] - \frac{\rho}{2n^2}\text{var}\left( \log \frac{V(n)}{V(0)} \right)  \\
	&= \frac{1}{n}\mathbb{E}\left[ \log \langle K, \mathcal{R}_n \rangle \right] - \frac{\rho}{2n^2}\text{var}\left( \log \langle K, \mathcal{R}_n \rangle \right) \\
	&= \frac{1}{n}\mathbb{E}\left[ \log \langle K, \mathcal{R}_n \rangle \right] - \frac{\rho}{2n^2}\mathbb{E}\left[ \left( \log \langle K, \mathcal{R}_n \rangle \right)^2 \right] \\
	&\qquad + \frac{\rho}{2n^2}\left( \mathbb{E}\left[ \log \langle K, \mathcal{R}_n \rangle \right] \right)^2,
\end{align*}
which is desired.
\end{proof}

While Problem~\eqref{problem: stochastic optimal control problem} may be nonconvex, see Appendix, the following lemma indicates that the necessary condition for optimality is possible to establish.

\begin{lemma}[Necessary Optimality Conditions] \label{Lemma: risk term}
    If the feedback vector $K^* \in \mathcal{K}$ is optimal to the stochastic control problem
\begin{align} 
    \begin{aligned} \label{problem: risk-sensitive problem}
        &\sup_{K \in \mathcal{K}}\; U_n^\rho (K; X)\\
        &{\rm s.t. } \; V(n) = \langle K, \mathcal{R}_n \rangle V_0 
    \end{aligned}
\end{align}
    then for~$i=1,2,\ldots,m$, we have
    \begin{align*}
        & \mathbb{E}\left[ \frac{ \mathcal{R}_{n,i} }{ \langle K^{*}, \mathcal{R}_n \rangle } \right] 
        - \frac{\rho}{n} \mathbb{E}\left[ \log \langle K^{*}, \mathcal{R}_n \rangle \cdot \frac{\mathcal{R}_{n,i}}{ \langle K^{*}, \mathcal{R}_n \rangle } \right] \\
        & +\frac{\rho}{n} \mathbb{E} \left[ \log \langle K^{*}, \mathcal{R}_n \rangle \right] \mathbb{E}\left[ \frac{\mathcal{R}_{n,i}}{\langle K^{*}, \mathcal{R}_n \rangle } \right] = 1, \text{ if $K_i^* > 0$ }
    \end{align*}
    \begin{align*}
         & \mathbb{E}\left[ \frac{\mathcal{R}_{n,i}}{\langle K^{*}, \mathcal{R}_n \rangle } \right] - \frac{\rho}{n}\mathbb{E}\left[ \log \langle K^{*}, \mathcal{R}_n \rangle \frac{\mathcal{R}_{n,i}}{\langle K^{*}, \mathcal{R}_n\rangle} \right] \\
        & +\frac{\rho}{n}\mathbb{E}\left[ \log \langle K^{*}, \mathcal{R}_n \rangle \right] \mathbb{E}\left[ \frac{\mathcal{R}_{n,i}}{ \langle K^{*}, \mathcal{R}_n \rangle } \right] \leq 1, \text{ if $K_i^* = 0$. }
    \end{align*}
\end{lemma}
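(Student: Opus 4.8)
The plan is to treat Problem~\eqref{problem: risk-sensitive problem} as the maximization of the scalar objective $U_n^\rho(\,\cdot\,;X)$ over the simplex $\mathcal{K}$ and to write down the first-order (Karush--Kuhn--Tucker) necessary conditions. Since $\mathcal{K}$ is cut out by the single affine equality $K^\top\mathbf{1}=1$ together with the affine inequalities $K_i\geq 0$, all constraints are affine, so a constraint qualification holds automatically and the KKT conditions are genuinely necessary at any (local) maximizer $K^*$, even though the objective need not be concave (cf.\ the Remark). I would therefore introduce a multiplier $\lambda$ for the equality constraint and multipliers $\mu_i\geq 0$ for $K_i\geq 0$, record the stationarity relations $\partial U_n^\rho/\partial K_i = \lambda - \mu_i$ together with complementary slackness $\mu_i K_i^*=0$, and observe that these collapse to the familiar dichotomy $\partial U_n^\rho/\partial K_i = \lambda$ when $K_i^*>0$ and $\partial U_n^\rho/\partial K_i \leq \lambda$ when $K_i^*=0$.

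The second ingredient is an explicit formula for the partial derivatives. Starting from the equivalent expression in Lemma~\ref{lemma: equivalent risk-sensitive problem} and using $\partial_{K_i}\log\langle K,\mathcal{R}_n\rangle = \mathcal{R}_{n,i}/\langle K,\mathcal{R}_n\rangle$, a direct differentiation yields
\begin{align*}
\frac{\partial U_n^\rho}{\partial K_i}
&= \frac{1}{n}\mathbb{E}\!\left[\frac{\mathcal{R}_{n,i}}{\langle K,\mathcal{R}_n\rangle}\right]
- \frac{\rho}{n^2}\mathbb{E}\!\left[\log\langle K,\mathcal{R}_n\rangle\,\frac{\mathcal{R}_{n,i}}{\langle K,\mathcal{R}_n\rangle}\right] \\
&\qquad + \frac{\rho}{n^2}\mathbb{E}\!\left[\log\langle K,\mathcal{R}_n\rangle\right]\mathbb{E}\!\left[\frac{\mathcal{R}_{n,i}}{\langle K,\mathcal{R}_n\rangle}\right].
\end{align*}
To legitimize differentiating under the expectation I would first record that the hypotheses $-1<X_{\min,i}<0<X_{\max,i}<1$ force $0<\prod_{k}(1+X_{\min,i})\leq \mathcal{R}_{n,i}\leq \prod_{k}(1+X_{\max,i})<\infty$, so that on the compact set $\mathcal{K}$ the quantity $\langle K,\mathcal{R}_n\rangle$ is bounded away from $0$ and from above. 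Consequently the integrands and their $K$-derivatives are uniformly bounded, and dominated convergence justifies interchanging $\partial_{K_i}$ with $\mathbb{E}$.

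The crux is identifying the multiplier, for which I would show $\lambda=1/n$. The idea is an Euler/homogeneity argument: multiply the stationarity relation by $K_i^*$ and sum over $i$. By complementary slackness the terms with $K_i^*=0$ drop out, and since $\sum_i K_i^*=1$ the right-hand side is exactly $\lambda$. On the left, the key cancellation is that $\langle K,\mathcal{R}_n\rangle$ is positively homogeneous of degree one in $K$, which gives the pointwise identity $\sum_i K_i^*\,\mathcal{R}_{n,i}/\langle K^*,\mathcal{R}_n\rangle = 1$. Hence
\begin{align*}
\sum_{i=1}^m K_i^*\,\frac{\partial U_n^\rho}{\partial K_i}\Big|_{K^*}
&= \frac{1}{n}\cdot 1 - \frac{\rho}{n^2}\mathbb{E}\!\left[\log\langle K^*,\mathcal{R}_n\rangle\right] \\
&\qquad + \frac{\rho}{n^2}\mathbb{E}\!\left[\log\langle K^*,\mathcal{R}_n\rangle\right] = \frac{1}{n},
\end{align*}
the two $\rho$-terms cancelling, which pins down $\lambda=1/n$.

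Finally, substituting $\lambda=1/n$ into the two branches of the KKT dichotomy and multiplying through by $n$ reproduces exactly the stated equality (for $K_i^*>0$) and inequality (for $K_i^*=0$). I expect the main obstacle to be the multiplier evaluation, namely recognizing that the degree-one homogeneity of $\langle K,\mathcal{R}_n\rangle$ makes the $K^*$-weighted sum of the gradient collapse so cleanly and forces the two variance-type terms to cancel; by comparison, the differentiation-under-the-integral justification is routine given the uniform bounds on $\mathcal{R}_{n,i}$.
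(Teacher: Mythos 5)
Your proposal is correct and follows essentially the same route as the paper: KKT conditions on the simplex, the $K_i^*$-weighted sum of the stationarity relations combined with the homogeneity identity $\sum_i K_i^* \mathcal{R}_{n,i}/\langle K^*,\mathcal{R}_n\rangle = 1$ to pin down the multiplier, then substitution back into the complementary-slackness dichotomy. The only differences are cosmetic --- the paper first negates and rescales the objective by $n$ (so its multiplier is $\lambda=1$ rather than your $\lambda=1/n$) --- while your added justifications (affine constraint qualification, dominated convergence for differentiating under $\mathbb{E}$) are points the paper silently assumes.
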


\begin{proof}
  With the aid of Lemma~\ref{lemma: equivalent risk-sensitive problem},  we rewrite Problem~\eqref{problem: risk-sensitive problem} as an equivalent constrained minimization problem described as follows:
    \begin{align*}
        &\min_{K} -\mathbb{E}\left[ \log \langle K, \mathcal{R}_n \rangle \right] + \frac{\rho}{2n} \mathbb{E}\left[ (\log \langle K, \mathcal{R}_n \rangle )^2 \right] \\
        &\qquad   -\frac{\rho}{2n}\left( \mathbb{E}\left[ \log \langle K, \mathcal{R}_n \rangle \right]\right)^2 \\
        &{\rm s. t.} \; K^\top  \mathbf{1} - 1 = 0; \\
        &\;\;\;\;\; -K^\top  e_i \leq 0, \; i=1,2, \ldots, m
    \end{align*}
    where $ e_i \in \mathbb{R}^m $ is unit vector having 1 at the $i$th component. Consider the Lagrangian
    \begin{align*}
        \mathcal{L}(K, \lambda, \mu)
        &:= -\mathbb{E}\left[ \log \langle K, \mathcal{R}_n \rangle \right] + \frac{\rho}{2n} \mathbb{E} \left[ ( \log \langle K, \mathcal{R}_n \rangle)^2 \right] \\
        &\qquad -\frac{\rho}{2n}\left( \mathbb{E}\left[ \log \langle K, \mathcal{R}_n \rangle \right] \right)^2 \\ & \qquad + \lambda (K^\top  \mathbf{1} - 1) - \mu^\top K.
    \end{align*}
Then the Karush-Kuhn-Tucker (KKT) Conditions implies that if $K^*$ is a local maximum then there exists a scalar~$\lambda \in \mathbb{R}^1$ and a vector $\mu \in \mathbb{R}^m$ with component $\mu_j \geq 0$ such that, for~$i=1,2, \ldots, m$,
    \begin{align}
        &-\mathbb{E}\left[ \frac{\mathcal{R}_{n,i}}{ \langle K^{*}, \mathcal{R}_n \rangle} \right] \left( 1 + \frac{\rho}{n}\mathbb{E}\left[ \log \langle K^{*}, \mathcal{R}_n \rangle \right] \right) \nonumber \\
        &\qquad + \frac{\rho}{n}\mathbb{E}\left[ \log \langle K^{*}, \mathcal{R}_n \rangle \frac{\mathcal{R}_{n,i}}{\langle K^{*}, \mathcal{R}_n \rangle }  \right] +\lambda - \mu_i = 0 \label{eq:partial Ki (risk)} \\
        & K^{*\top} \mathbf{1} - 1 = 0 \label{eq: partical lambda (risk)} \\
        & \mu_i K_i^* = 0 \label{eq: Mu_i x Ki* = 0 (risk)}
    \end{align}
From Equation~\eqref{eq:partial Ki (risk)}, we obtain
    \begin{align} \label{eq: Mu_i with lambda (risk)}
        \mu_i 
        &= -\mathbb{E}\left[ \frac{\mathcal{R}_{n,i}}{K^{*\top}\mathcal{R}_n} \right] \left( 1 + \frac{\rho}{n}\mathbb{E}\left[ \log \langle K^{*}, \mathcal{R}_n \rangle \right] \right) \nonumber \\
        &\qquad +\frac{\rho}{n}\mathbb{E}\left[ \log \langle K^{*}, \mathcal{R}_n \rangle \cdot \frac{\mathcal{R}_{n,i}}{ \langle K^{*}, \mathcal{R}_n \rangle }  \right] +\lambda.
    \end{align}
for $i=1,2, \ldots, m$. Using the fact that $\mu_i K_i^* = 0$, for $i=1,2,\dots,n$, we take weighted sum of Equation~(\ref{eq: Mu_i with lambda (risk)}) and have
    \begin{align*}
        &\sum_{i=1}^m \mu_i K_i^* \\
        &\quad = \sum_{i=1}^m K_i^* \left( -\mathbb{E}\left[ \frac{\mathcal{R}_{n,i}}{ \langle K^{*}, \mathcal{R}_n \rangle } \right] \left( 1 + \frac{\rho}{n}\mathbb{E}\left[ \log \langle K^{*}, \mathcal{R}_n \rangle \right] \right) \right) \\
        &\quad + \sum_{i=1}^m K_i^* \left( \frac{\rho}{n} \mathbb{E} \left[ \log \langle K^{*}, \mathcal{R}_n\rangle \cdot \frac{ \mathcal{R}_{n,i} }{\langle K^{*}, \mathcal{R}_n \rangle}  \right] + \lambda \right) = 0. 
    \end{align*}
Since 
$$
\sum_{i=1}^m K_i^* \mathbb{E}\left[ \frac{\mathcal{R}_{n,i}}{\langle K^{*}, \mathcal{R}_n \rangle } \right] 
= \mathbb{E}\left[  \frac{\langle K^{*}, \mathcal{R}_n \rangle }{\langle K^{*}, \mathcal{R}_n \rangle} \right] = 1, 
$$
and $\sum_{i=1}^m K_i^* = 1$, we obtain
    \begin{align}
        -1 - \frac{\rho}{n} \mathbb{E}\left[ \log \langle K^{*}, \mathcal{R}_n \rangle \right] + \frac{\rho}{n} \mathbb{E}\left[ \log \langle K^{*}, \mathcal{R}_n \rangle \right] +\lambda = 0.
    \end{align}
Thus, we have $\lambda=1$ and substitute it into Equation~(\ref{eq: Mu_i with lambda (risk)}). This tells us that for $i=1,2, \ldots, m$,
    \begin{align*}
        \mu_i 
        &= -\mathbb{E}\left[ \frac{\mathcal{R}_{n,i}}{\langle K^{*}, \mathcal{R}_n \rangle } \right] \left( 1 + \frac{\rho}{n} \mathbb{E}\left[ \log \langle K^{*}, \mathcal{R}_n \rangle \right] \right) \nonumber \\
        &\qquad + \frac{\rho}{n}\mathbb{E}\left[ \log \langle K^{*}, \mathcal{R}_n \rangle  \cdot \frac{\mathcal{R}_{n,i}}{\langle K^{*}, \mathcal{R}_n \rangle }  \right] + 1.
    \end{align*}
    The fact $\mu_i K_i^*=0$ implies that if $K_i^* > 0$, then $\mu_i = 0$ and
    \begin{align*}
        & \mathbb{E}\left[ \frac{\mathcal{R}_{n,i}}{ \langle K^{*}, \mathcal{R}_n \rangle } \right] 
        - \frac{\rho}{n} \mathbb{E}\left[ \log \langle K^{*}, \mathcal{R}_n \rangle \cdot \frac{\mathcal{R}_{n,i}}{\langle K^{*}, \mathcal{R}_n \rangle } \right] \\
        &\qquad + \frac{\rho}{n}\mathbb{E}\left[ \log \langle K^{*}, \mathcal{R}_n \rangle \right] \mathbb{E}\left[ \frac{\mathcal{R}_{n,i}}{\langle K^{*}, \mathcal{R}_n \rangle} \right] = 1.
    \end{align*}
    If $K_i^* = 0$, then $\mu_i \geq 0$, which implies that
    \begin{align*}
        & \mathbb{E}\left[ \frac{\mathcal{R}_{n,i}}{\langle K^{*}, \mathcal{R}_n \rangle } \right] - \frac{\rho}{n}\mathbb{E} \left[ \log \langle K^{*}, \mathcal{R}_n \rangle  \cdot \frac{\mathcal{R}_{n,i}}{\langle K^{*}, \mathcal{R}_n \rangle } \right] \\
        &\qquad +\frac{\rho}{n}\mathbb{E}\left[ \log(K^{*\top}\mathcal{R}_n) \right] \mathbb{E}\left[ \frac{\mathcal{R}_{n,i}}{K^{*\top}\mathcal{R}_n} \right] \leq 1.
    \end{align*}
Hence, the proof is complete.    
\end{proof}

\begin{remark}\rm
    While Lemma~\ref{Lemma: risk term} only provides necessary conditions for optimality, it is possible to construct an example that {\em extends} these to both necessary and sufficient conditions. For an illustration,  see Section~\ref{section: Illustrative Examples} to follow.
\end{remark}

\section{Illustrations} \label{section: Illustrative Examples}
This section provides two examples to validate our findings on the impact of the risk-aversion coefficient. The first is on optimal betting and the second is on retail inventory management.

\subsection{Optimal Betting}
    We now provide an example in optimal betting and illustrate how the risk term~$\rho$ affects the optimal feedback vector~$K^*$. We consider a two-alternative decision-making process with one risk-less alternative having zero interest rate; i.e., $X_1(k)=0$ with probability one; and the other is a risky alternative with i.i.d. payoffs $X_2(k) \in \{ -\frac{1}{2},\frac{1}{2} \}$ with probability
    $$
    \mathbb{P}\left( X_2(k) = \frac{1}{2} \right) := p \in \left( \frac{1}{2}, \frac{3}{4} \right).
    $$
With this setting, the log-variance becomes
\begin{align*}
    v(K)
    & := {\rm var}\left( \log \frac{V(n; K)}{V_0} \right) \\
    & = \mathbb{E}\left[ \left( \log \langle K, \mathcal{R}_n \rangle  \right)^2 \right]  
        - \left( \mathbb{E} \left[ \log\langle K, \mathcal{R}_n \rangle  \right] \right)^2\\
    & = p (\log (1 + K_2/2))^2  + (1-p) \log (1- K_2/2 )^2  \\
    & \qquad - (p \log (1 + K_2/2) + (1-p)\log (1 - K_2/2)^2.
\end{align*}
A lengthy but straightforward calculation leads to the second-order derivative of $v(K)$:
\[
\frac{\partial^2 v(K)}{ \partial K_2^2 } = \frac{16 p (1-p)  (2 + K_2 \log \frac{2 + K_2 }{2 - K_2} )}{ \left( K_2^2 - 4 \right)^2} \geq 0,
\]
which shows that the log-variance $v(K)$ is convex. Therefore, the risk-sensitive objective~\eqref{eq: risk-sensitive objective} becomes a concave function in~$K_2$, which implies that Problem~\eqref{problem: risk-sensitive problem} is a concave program. Therefore, Lemma~\ref{Lemma: risk term} becomes a necessary and sufficient condition for optimality.    
Specifically, for~$n=1$, with the aids of Lemma~\ref{Lemma: risk term}, we have
\begin{align*}
        & \mathbb{E} \left[ \frac{ \mathcal{R}_{1,2} }{ \langle K^{*}, \mathcal{R}_1 \rangle } \right] - \rho\mathbb{E}\left[ \log \langle K^{*}, \mathcal{R}_1 \rangle \cdot \frac{ \mathcal{R}_{1,2} }{ \langle K^{*}, \mathcal{R}_1 \rangle } \right] \\
        & + \rho \mathbb{E} \left[ \log \langle K^{*}, \mathcal{R}_1 \rangle \right] \mathbb{E}\left[ \frac{\mathcal{R}_{1,2}}{ \langle K^{*}, \mathcal{R}_1 \rangle } \right] = 1.
    \end{align*}
We define the left-hand side as a function of $p, K_2$, and $\rho$, call it $f(p, K_2, \rho)$. Then, a straightforward calculation leads to
\begin{align*}
    f(p, K_2, \rho)
    & =-1 + \frac{2p}{1+K_2^*}-\frac{2p\rho\log( 1 + K_2^* )}{ 1 + K_2^*}\\
    & + \frac{2p\rho\left( p\log(1 + K_2^*)+(1-p)\log(1-K_2^*)\right)}{1+K_2^*}\\
    & =0.
\end{align*}
Note that, for example, if $\rho = 0$, the function $f$ has a zero-crossing solution $K_2^* = 2(2p - 1)$, which reduces to the classical Kelly's solution; see~\cite{Kelly_1956, hsieh2019contributions}.
As a second example, if~$p=0.6$, then $K_2^*=0.4$ as $\rho = 0$. If we increase~$\rho$ to~$0.1$, we obtain the~$K_2^* \approx 0.3646$, slightly smaller than~$0.4$, implying that investors will reduce the weight as the risk aversion constant $\rho$ rises. 
When $\rho$ rises from $0$ to $1$, $K_2^*$ falls by around $0.2$ to $0.2035$; see Figures~\ref{fig: small p} and \ref{fig: small p zoom in}.

We also consider extreme cases such as $p = 0.75$ which yields~$K_2^* = 1$ when $\rho = 0$ and $K_2^* \approx 0.5643$ when~$\rho = 1$; see Figure~\ref{fig: large p zoom in}. From the cases discussed above, we find that~$K_2^*$ is negatively affected by $\rho$ and is more sensitive to $\rho$ as the probability of profiting becomes higher.

\begin{figure}[h!]
    \centering
    \includegraphics[width=1\linewidth]{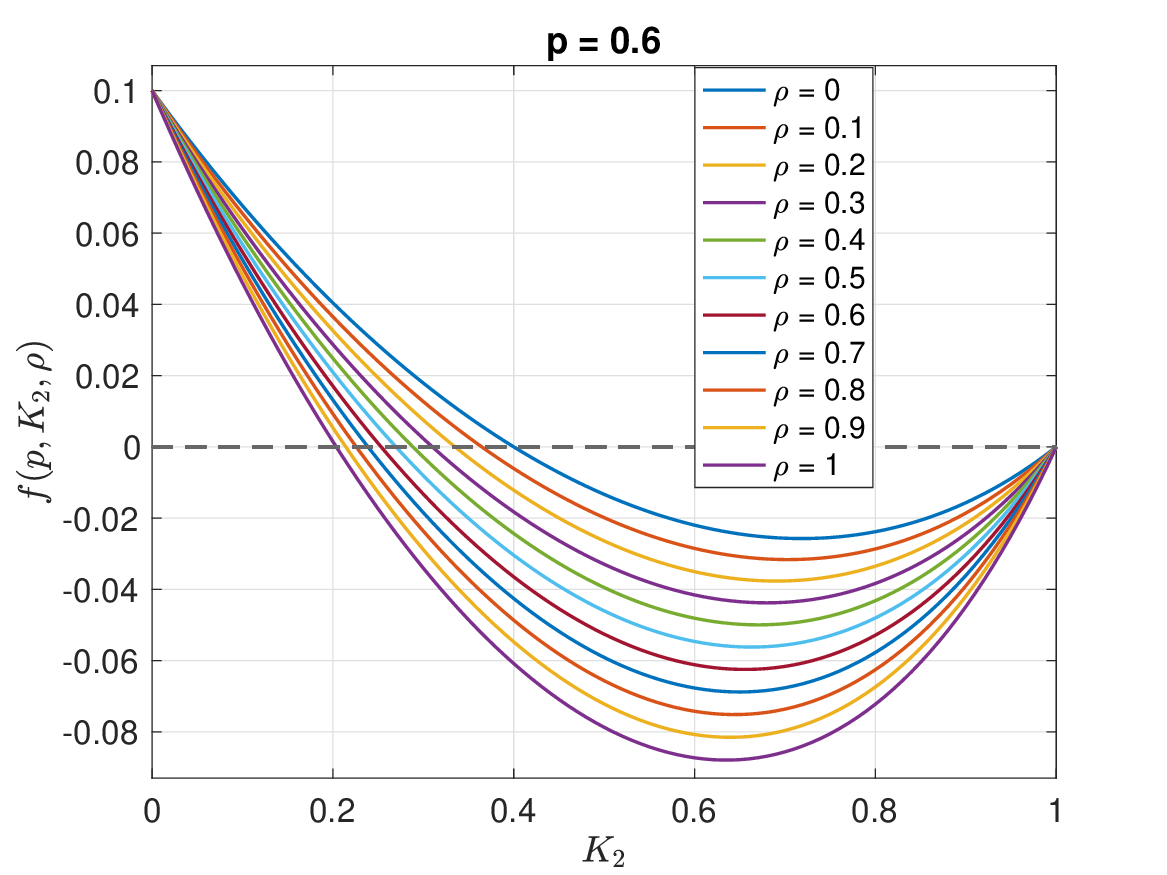}
    \caption{Optimal Feedback Gain $K_2^*$ for $\rho \in \{0, 0.1, 0.2, \dots, 1\}$ when $p=0.6$.}
    \label{fig: small p}
\end{figure}

\begin{figure}[h!]
    \centering
    \includegraphics[width=1\linewidth]{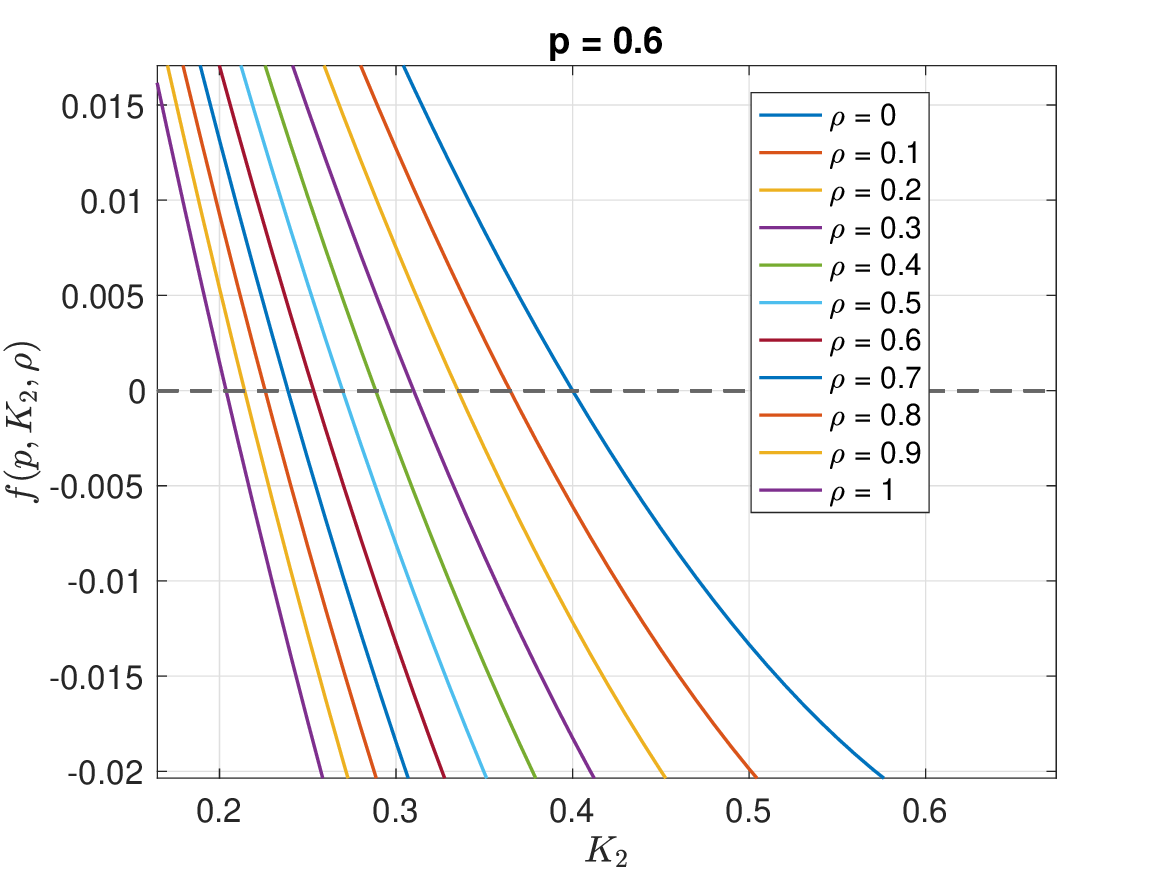}
    \caption{Zoom In: Optimal Feedback Gain $K_2^*$ for $\rho \in \{0, 0.1, 0.2, \dots, 1\}$ When $p=0.6$.}
    \label{fig: small p zoom in}
\end{figure}


\begin{figure}[h!]
    \centering
    \includegraphics[width=1\linewidth]{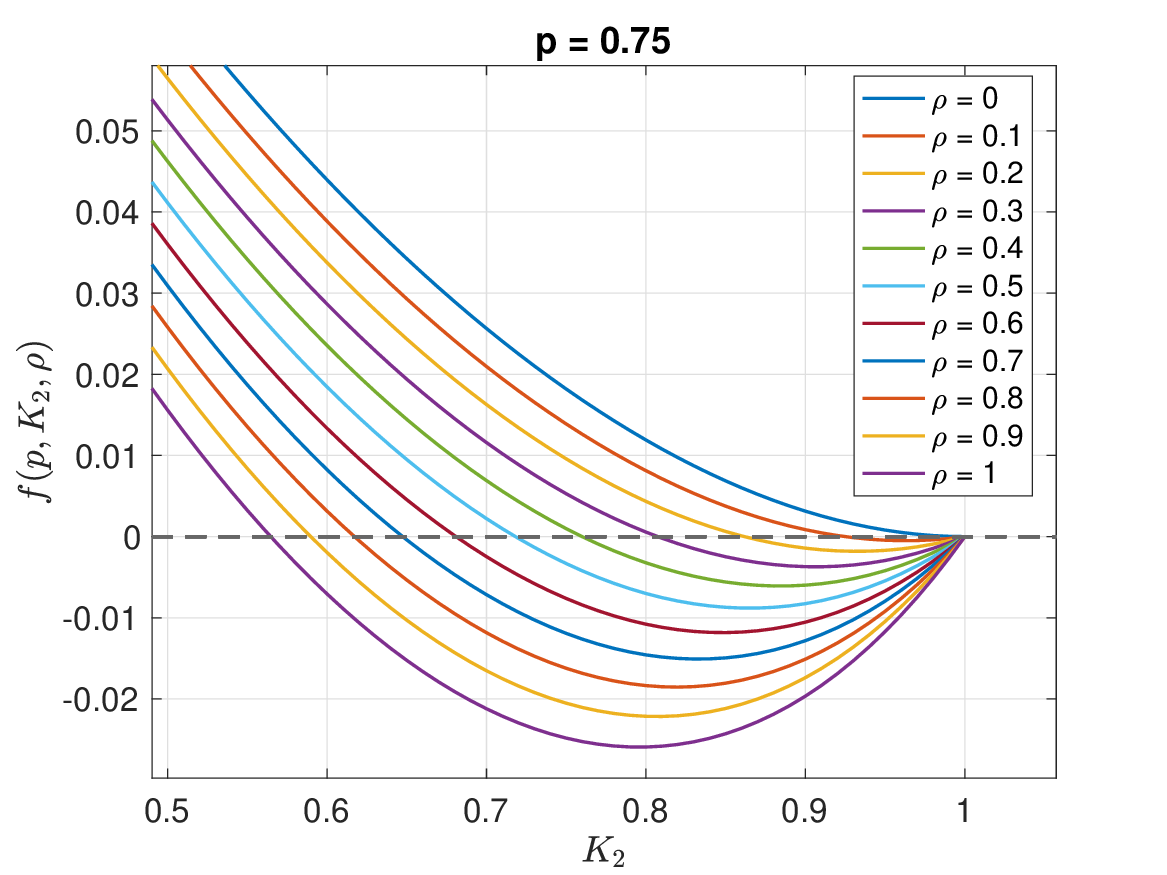}
    \caption{Zoom In: Optimal Feedback Gain $K_2^*$ for $\rho \in \{0, 0.1, 0.2, \dots, 1\}$ When $p = 0.75$.}
    \label{fig: large p zoom in}
\end{figure}

\begin{remark} \rm
    This example illustrates optimal betting and investment decisions and underlines its practical relevance. By illustrating how risk aversion $\rho$ influences the allocation of resources among different alternatives, we shed light on bridging theoretical models and real-world financial decision-making.  
\end{remark}

\subsection{Retail Inventory Management}  
   As a second example, we consider an inventory management problem faced by a retail operation, illustrating the application of a decision-making framework with an uncertain variable demand. 
    Specifically, the retail operation is confronted with the challenge of setting the inventory levels for two distinct categories, encapsulated by feedback gains $K_1$ and~$K_2:=1-K_1$ with~\(K_i \in [0, 1]\). These gains represent the proportion of the maximum possible inventory capacity \( I_{\max} \) units, for an upcoming sales period. These decisions are adjusted in anticipation of demand fluctuations, modeled as~$X_2(k) \sim {\rm Uniform}(-1, X_{\max})$ and $X_1(k) := 0$ with probability one. 
    
    The retailer's objective is to find the optimal feedback gain vector \(K\) that maximizes a risk-sensitive objective function~\eqref{problem: risk-sensitive problem}, integrating the expected logarithmic profits and the variance of these profits, with the risk aversion parameter~\( \rho \) modulating the influence of risk.

Similar to the prior example, we first verify the convexity of the log-variance. To this end, define an auxiliary random variable $\mathcal{X}_{n,i} := \mathcal{R}_{n,i} -1$ for $i \in \{1,2\}$ and $n\geq 1$. Note that
\begin{align*}
    v(K)
    &={\rm var}\left( \log \frac{V(n; K)}{V_0} \right) \\ 
     &= \mathbb{E}[(\log (1+K_2 \mathcal{X}_{n,2})^2] 
         - \mathbb{E}[\log (1+K_2 \mathcal{X}_{n,2})]^2\\
    & = \int_{-1}^{(1 + X_{\max})^n-1} (\log (1+K_2 x))^2 f_{\mathcal{X}_{n,2}} (x)dx \\
    &\qquad - \left( \int_{-1}^{(1+X_{\max})^n-1}  \log (1+K_2 x)f_{\mathcal{X}_{n,2}} (x)dx \right)^2.
\end{align*}
where $f_{\mathcal{X}_{n,2}}$ is the probability density function of induced payoff~$\mathcal{X}_n$ obtained in Proposition~\ref{proposition: Erlang compound return} in Appendix.

To illustrate, we take $I_{\max} = 1000$ units, symmetric demand volatility $X_{\max} = 1 = -X_{\min}$, risk-aversion constant~$\rho = 1/2$, and~$n \geq 1 $,
Figure~\ref{fig: Convexity of the Log-Variance} shows the second-order derivative of the log-variance under various $n \in \{1, 5, 10\}$, which is readily seen that $v(K)$ is a convex function; hence, Problem~\eqref{problem: risk-sensitive problem} is a concave program and Lemma~\ref{Lemma: risk term} becomes necessary and sufficient conditions. Moreover, by Lemma~\ref{Lemma: risk term}, with $n=5$, we obtain $K_1^*=1$ and $K_2^*=0$.

\begin{figure}[h!]
    \centering
    \includegraphics[width=1\linewidth]{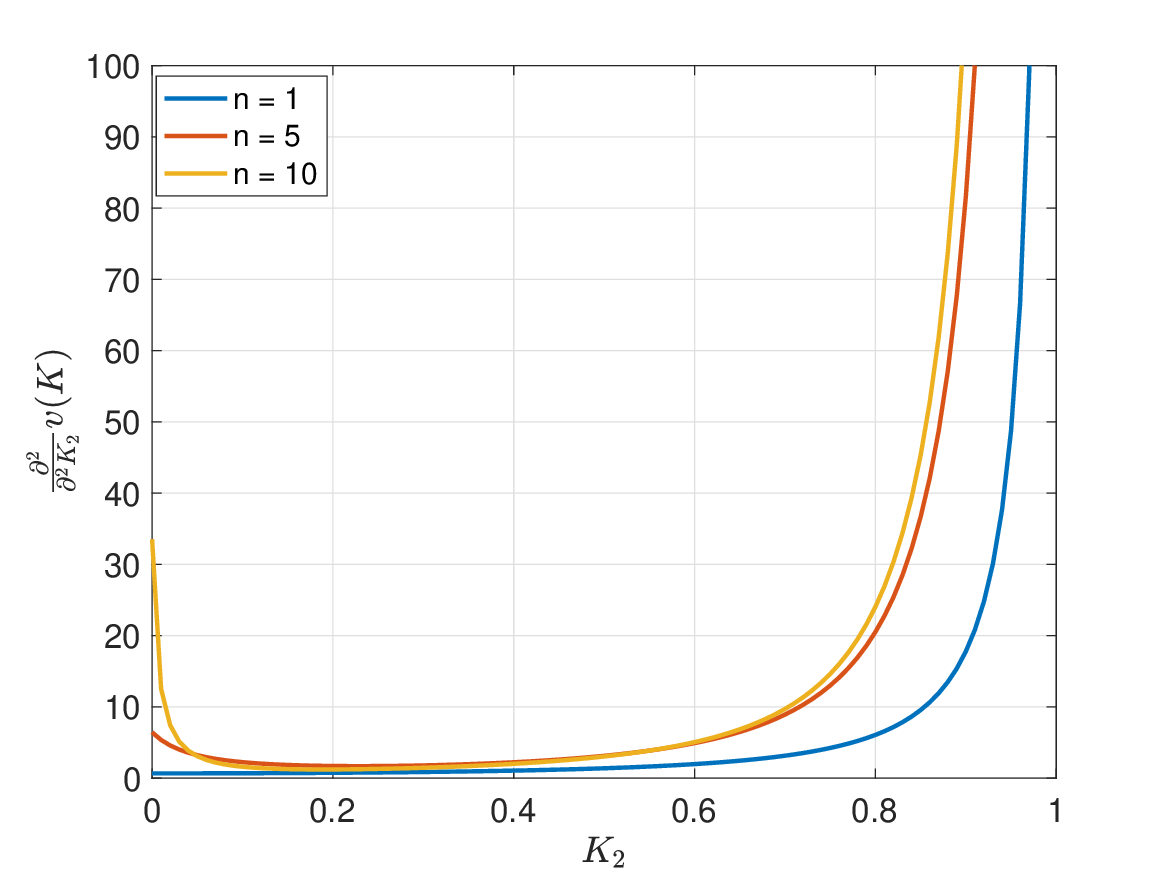}
    \caption{Convexity of the Log-Variance: $\frac{\partial^2 v(K)}{\partial K_2^2}$ Versus $K_2$.}
    \label{fig: Convexity of the Log-Variance}
\end{figure}

\begin{remark} \rm
    Leveraging a uniform distribution for initial demand fluctuations, which compound into an Erlang-based distribution, underscores the strategic advantage of predictive inventory management in uncertain environments. The optimization of \(K\) values based on this compounded demand model supports a {\em focused} policy, potentially prioritizing a single product category to manage risk and enhance returns effectively. 
\end{remark}

\section{Concluding Remarks} \label{section: Concluding Remarks}
    In this paper, we have introduced and examined a new risk-sensitive decision-making model under uncertainty, formulating the problem as a stochastic control problem and establishing necessary optimality conditions.
    Illustrative examples are given to demonstrate the idea.

    Our findings illuminate the intricate balance between maximizing expected log-growth and mitigating log-variance, providing a robust framework for decision-makers who face uncertain environments. This work has broad implications in financial sectors. Moreover, as seen in Section~\ref{section: Illustrative Examples}, the model's applicability extends beyond finance, offering valuable perspectives for risk management policies in sectors ranging from optimal betting to inventory management.

    Despite the contributions of this study, we recognize its limitations, including the i.i.d. assumptions on the stochastic alternatives. 
    For future work, one possibility is to explore more generalized models that relax these assumptions, for example, dive into the \textit{variance-regularized} optimization problem, see~\cite{duchi2019variance}, and seek possible convex surrogates of log-variance so that it is possible to enhance computational tractability and near-optimality.  
    Another interesting direction to pursue would be considering other practical constraints in the model such as risk limits~\cite{jorion2007value} or possible environmental and social constraints~\cite{eccles2014impact}.

 \appendix
\label{Induced Return:Uniform_Distribution}
This appendix studies the probability density function for induced payoffs
$\mathcal{X}_n := \prod_{k=0}^{n-1} (1+X(k)) -1 = \mathcal{R}_n - 1$
where $X(k)$ are i.i.d. uniform distributed random variables on~$[ -1, X_{\max}]$ with $X_{\max} > 0$.

\begin{proposition} \label{proposition: Erlang compound return}
    The probability density function for a random variable $\mathcal{X}_n := \prod_{k=0}^{n-1} (1+X(k)) -1  \in \mathbb{R}^1$ is as follows: For $ - 1 < z < \left( 1 + X_{\max } \right)^n - 1$, 
\[
f_{{\cal X}_n } \left( z \right) = \frac{1}{{{{(1 + {X_{\max }})}^n}\left( {n - 1} \right)!}}{\left( {\log \frac{{{{(1 + {X_{\max }})}^n}}}{{1 + z}}} \right)^{n - 1}};
\]	
Otherwise, $f_{\mathcal{X}_n}(z)=0.$
\end{proposition}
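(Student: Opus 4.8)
The plan is to reduce the problem to a known distribution by passing to logarithms, where the product of uniforms becomes a sum of i.i.d.\ exponentials whose law is Erlang. First I would set $Y_k := 1 + X(k)$; since $X(k) \sim \mathrm{Uniform}(-1, X_{\max})$, each $Y_k$ is uniform on $(0, 1+X_{\max})$, the $Y_k$ are independent, and $\mathcal{R}_n = \prod_{k=0}^{n-1} Y_k$. Writing $a := 1 + X_{\max}$ and taking logarithms gives $\log \mathcal{R}_n = \sum_{k=0}^{n-1} \log Y_k$, which converts the product into a sum and makes the independence directly exploitable.

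Next I would identify the law of each summand. Since $Y_k / a \sim \mathrm{Uniform}(0,1)$, the standard fact that $-\log U \sim \mathrm{Exp}(1)$ for $U$ uniform on $(0,1)$ yields $\log Y_k = \log a - E_k$ with $E_k$ i.i.d.\ $\mathrm{Exp}(1)$. Hence $\log \mathcal{R}_n = n \log a - G$, where $G := \sum_{k=0}^{n-1} E_k$ is a sum of $n$ i.i.d.\ unit-rate exponentials and therefore Erlang-distributed with shape $n$ and rate $1$, i.e.\ with density $f_G(g) = g^{n-1} e^{-g}/(n-1)!$ for $g \ge 0$. This is the step that supplies the factorial and the $(n-1)$-power appearing in the claim.

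Finally I would transport this density back through two monotone changes of variables. Setting $W := \log \mathcal{R}_n = n\log a - G$ gives $f_W(w) = f_G(n\log a - w)$ on $w < n\log a$; then $\mathcal{R}_n = e^W$ yields $f_{\mathcal{R}_n}(r) = f_W(\log r)/r$ on $0 < r < a^n$, and the exponential factor $e^{-(n\log a - \log r)} = r/a^n$ cancels the $1/r$, leaving $f_{\mathcal{R}_n}(r) = (n\log a - \log r)^{n-1}/\bigl(a^n (n-1)!\bigr)$. A last shift $\mathcal{X}_n = \mathcal{R}_n - 1$ (unit Jacobian) together with the identity $n\log a - \log r = \log\bigl(a^n/(1+z)\bigr)$ at $r = 1+z$ produces exactly the stated density on $-1 < z < a^n - 1$, and zero elsewhere.

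The bookkeeping above is routine; the only place demanding care is tracking the support and the Jacobian through the three transformations so that the final domain is $-1 < z < (1+X_{\max})^n - 1$ and the density is nonnegative there. As a cross-check and an alternative route, one could instead argue by induction on $n$: the base case $n=1$ is the uniform density $1/(1+X_{\max})$ on $(0, 1+X_{\max})$, which matches the formula since $(n-1)! = 1$ and the zeroth power equals $1$; the inductive step computes the density of the product $\mathcal{R}_{n+1} = \mathcal{R}_n \cdot Y_n$ via the Mellin-type convolution $f_{\mathcal{R}_{n+1}}(r) = \int f_{\mathcal{R}_n}(r/y)\, f_{Y_n}(y)\, y^{-1}\, dy$. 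The main obstacle on that route is evaluating the resulting log-power integral and correctly propagating the support, which is precisely the difficulty that the logarithmic change of variables sidesteps.
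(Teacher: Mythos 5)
Your proof is correct and takes essentially the same route as the paper's: both arguments hinge on the same key observation that $-\log\bigl((1+X)/(1+X_{\max})\bigr)\sim\mathrm{Exp}(1)$, so the log of the product becomes a sum of i.i.d.\ unit exponentials, i.e.\ an $\mathrm{Erlang}(n,1)$ variable. The only difference is bookkeeping at the end --- the paper computes the CDF of $\mathcal{X}_n$ from the explicit Erlang CDF and differentiates, while you transport the Erlang density through the monotone changes of variables with Jacobians --- and both yield the stated density on the same support.
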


 The proof of the proposition above is established with the aid of the following lemma.

\begin{lemma} \label{lemma: auxiliary lemma}
    Let $X$ be a uniformly distributed random variable on $[-1, X_{\max}]$ with~$X_{\max} >0$. Define 
	$$
	Y := - \log \left( \frac{1 + X }{1+X_{\max}} \right).
	$$ 
 Then $Y$ is exponentially distributed with parameter $\lambda=1$; i.e., $Y \sim \exp(1)$. 
\end{lemma}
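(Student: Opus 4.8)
The plan is to show directly that the cumulative distribution function (CDF) of $Y$ coincides with that of an $\exp(1)$ random variable, namely $F_Y(y) = 1 - e^{-y}$ for $y \geq 0$. I prefer this route because it avoids any appeal to the change-of-variables formula together with its monotonicity and differentiability hypotheses, relying only on the elementary CDF of the uniform law.

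First I would record the support and the tail probability of $X$: since $X$ is uniform on $[-1, X_{\max}]$, its density is the constant $1/(1+X_{\max})$ on that interval, and for any threshold $a \in [-1, X_{\max}]$ one has $\mathbb{P}(X \geq a) = (X_{\max} - a)/(1 + X_{\max})$. I would also observe that the map $g(x) := -\log\!\big((1+x)/(1+X_{\max})\big)$ is strictly decreasing and sends $[-1, X_{\max}]$ onto $[0, \infty)$, so that $Y = g(X)$ is supported on $[0, \infty)$ and $F_Y(y) = 0$ for $y < 0$, exactly as an exponential variable requires.

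Next, for $y \geq 0$ I would unwind the definition of $Y$. Applying the (decreasing) exponential to both sides and then clearing the logarithm, the event $\{Y \leq y\}$ becomes $\{(1+X)/(1+X_{\max}) \geq e^{-y}\}$, i.e. $\{X \geq (1+X_{\max})e^{-y} - 1\}$. The key bookkeeping step here is tracking the inequality reversal caused by the negative sign in the definition of $Y$, and checking that the threshold $a_y := (1+X_{\max})e^{-y} - 1$ indeed lies in $[-1, X_{\max}]$ for every $y \geq 0$, so that the uniform tail formula above applies without a boundary correction. Substituting $a_y$ into $\mathbb{P}(X \geq a_y)$ and simplifying, the factor $(1+X_{\max})$ cancels and I obtain $F_Y(y) = 1 - e^{-y}$, which is precisely the $\exp(1)$ distribution function; this completes the argument.

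This computation is essentially routine, so I do not anticipate a genuine obstacle; the only places demanding care are the direction of the inequalities under the strictly decreasing map $g$ and the verification that the transformed threshold $a_y$ remains within the support of $X$. As an alternative I could instead differentiate, using $f_Y(y) = f_X\!\big(g^{-1}(y)\big)\,\big|(g^{-1})'(y)\big|$ with $g^{-1}(y) = (1+X_{\max})e^{-y} - 1$; here the Jacobian $(1+X_{\max})e^{-y}$ again cancels the uniform density to yield $f_Y(y) = e^{-y}$. I would nonetheless favor the CDF route, since it sidesteps having to justify the change of variables and handles the support endpoints transparently.
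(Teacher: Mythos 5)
Your proposal is correct and follows essentially the same route as the paper: both arguments note $F_Y(y)=0$ for $y<0$, invert the logarithm to rewrite $\{Y\le y\}$ as $\{X \ge (1+X_{\max})e^{-y}-1\}$ (tracking the inequality reversal), verify this threshold lies in $[-1,X_{\max}]$, and then evaluate the uniform probability to obtain $F_Y(y)=1-e^{-y}$. The only cosmetic difference is that you apply the uniform tail formula directly while the paper writes $1-\mathbb{P}(X\le a_y)$ and uses the uniform CDF, which is the same computation.
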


\begin{proof}
    We begin with computing the cumulative distribution function (CDF) for $Y$. Note that $F_Y(y) = 0$ for all $y < 0$; therefore, it suffices to consider the CDF for $y \ge 0 $; namely,
\begin{align*}
	F_Y(y)
	&= \mathbb{P} \left(  - \log \left( {\frac{{1 + X}}{1 + X_{\max } }} \right) \le y \right) 
	\\ 
	&= \mathbb{P}\left( {X \ge \left( {1 + {X_{\max }}} \right){e^{ - y}} - 1} \right)
	\\ 
	&= 1 - \mathbb{P}\left( {X \le \left( {1 + {X_{\max }}} \right){e^{ - y}} - 1} \right).
\end{align*}
Now note that for $y \ge 0$, 
$$ - 1 \le \left( {1 + {X_{\max }}} \right){e^{ - y}} - 1 \le {X_{\max }}$$ 
and since $X $ is uniform distributed on~$[-1, X_{\max}]$, we have
\begin{align*}
    F_Y(y)
    & = 1 - \frac{{\left( 1 + X_{\max} \right){e^{ - y}} - 1 + 1}}{X_{\max } + 1}\\
    &= 1 - e^{ - y},
\end{align*}
which is the CDF for an exponentially distributed with mean one; i.e., $Y \sim \exp(1).$
\end{proof}

 \begin{proof}[Proof of Proposition~\ref{proposition: Erlang compound return}] We begin with computing the cumulative distribution function~$F_{\mathcal{X}_n}(z)$ for $\mathcal{X}_n$. Note that~$F_{\mathcal{X}_n}(z)= 0$ for $z \le -1$; hence, it suffices to consider the CDF for~$z > -1$;~i.e.,
\begin{align*}
	F_{\mathcal{X}_n}(z)
	&= \mathbb{P}\left( {\prod\limits_{k = 0}^{n - 1} {(1 + X(} k)) - 1 \le z} \right)
	\\
	&= \mathbb{P}\left( {\sum\limits_{k = 0}^{n - 1} {\log (1 + X(k))}  \le \log \left( {1 + z} \right)} \right).
\end{align*}
Using the identity 
{\small 
\[
\sum_{k = 0}^{n - 1} {\log (1 + X(k))}  = \sum_{k = 0}^{n - 1} {\log \left( {\frac{{1 + X( k )}}{1 + X_{\max }}} \right)}  + n\log \left( {1 + {X_{\max }}} \right),
\]
}we have
{\small \begin{align*}
 &\mathbb{P} ({\cal X}_n \le z) \\
	&= \mathbb{P} \left( {\sum_{k = 0}^{n - 1} {\log \left( {\frac{{1 + X\left( k \right)}}{{1+X_{\max} }}} \right)}  + n\log \left( {1+X_{\max} } \right) \le \log \left( {1 + z} \right)} \right)
	\\
	&  = \mathbb{P} \left( {\sum_{k = 0}^{n - 1} Y (k) \ge n\log \left( 1 + X_{\max} \right) - \log \left( {1 + z} \right)} \right)
\end{align*}
}where
$ Y(k) := - \log \left( \frac{1 + X( k )}{1+X_{\max}} \right)
$
for $k=0,1, \dots ,n-1$.
Applying Lemma~\ref{lemma: auxiliary lemma}, it follows that $Y(k)\sim \exp(1)$. Furthermore, since the $Y(k)$ are i.i.d., using the fact that the sum of $n$ i.i.d. random variables with $\exp(1)$ is~$Erlang(n,1)$, see~\cite{gubner2006probability}, we have
$
E_n := \sum_{k=0}^{n-1} Y(k) \sim Erlang(n,1).
$
Now noting that $Erlang(n,1)$ has CDF $F_{E_n}(x) =0$ for $x \le 0$ and
$
F_{E_n}(x)= 1 - \sum_{k=0}^{n-1} \frac{x^k}{k!}e^{-x}
$  
for $x>0$. Hence, we have
\begin{align*}
	F_{\mathcal{X}_n}(z)
	& = \mathbb{P}\left( { \sum_{k=0}^{n-1} Y(k)   \ge n\log \left( {1 + {X_{\max }}} \right) - \log \left( {1 + z} \right)} \right)
	\\
	& = \mathbb{P}\left( {\sum_{k=0}^{n-1} Y(k)  \ge \log \left( {\frac{{{ {\left( 1 + X_{\max } \right) }^n}}}{{1 + z}}} \right)} \right)
	\\ 
	&= 1 - \mathbb{P}\left( {\sum_{k=0}^{n-1} Y(k)   \le \log  {\frac{{{{\left( 1+X_{\max} \right)}^n}}}{{1 + z}}} } \right).
\end{align*}
Since for $-1 < z < (1+X_{\max})^n -1 $, we have
\[\log \left( {\frac{{{{\left( {1 + {X_{\max }}} \right)}^n}}}{{1 + z}}} \right) > 0.
\]
Therefore, the corresponding CDF for $\mathcal{X}_n$ is given as follows: For $-1<z < {\left( {1 + X_{\max}} \right)^n} - 1$, we have
 $$
 F_{\mathcal{X}_n}(z ) =\frac{{1 + z}}{{{{\left( {1 + X_{\max}} \right)}^n}}}\sum_{k = 0}^{n - 1} {\frac{1}{{k!}}{{\left( {\log \left( {\frac{{{{\left( {1 + X_{\max}} \right)}^n}}}{{1 + z}}} \right)} \right)}^k}}
 $$
otherwise, $F_{\mathcal{X}_n}(z ) = 0.$
Thus, the associated probability density function is obtained by taking the derivative of the CDF above; i.e., For~${ - 1 < z < {{\left( {1 + {X_{\max }}} \right)}^n} - 1}$, we have
\[{f_{{\cal X}_n } }\left( z \right) = \frac{1}{{{{(1 + {X_{\max }})}^n}\left( {n - 1} \right)!}}{\left( {\log \frac{{{{(1 + {X_{\max }})}^n}}}{{1 + z}}} \right)^{n - 1}}\]	
otherwise, $f_{\mathcal{X}_n}(z)=0.$  
\end{proof}

\bibliographystyle{ieeetr}
\bibliography{refs}          

\begin{thebibliography}{10}

\bibitem{shapiro2021lectures}
A.~Shapiro, D.~Dentcheva, and A.~Ruszczynski, {\em {Lectures on Stochastic
  Programming: Modeling and Theory}}.
\newblock SIAM, 2021.

\bibitem{bertsekas2012dynamic}
D.~Bertsekas, {\em {Dynamic Programming and Optimal Control: Volume I}},
  vol.~4.
\newblock Athena scientific, 2012.

\bibitem{von2007theory}
J.~Von~Neumann and O.~Morgenstern, {\em {Theory of Games and Economic
  Behavior}}.
\newblock Princeton University Press, 2007.

\bibitem{Markowitz_1952}
H.~M. Markowitz, ``{Portfolio Selection},'' {\em The Journal of Finance},
  vol.~7, pp.~77--91, 1952.

\bibitem{Kelly_1956}
J.~Kelly~jr, ``{A New Interpretation of Information Rate},'' {\em The Bell
  System Technical Journal}, 1956.

\bibitem{thorp2006kelly}
E.~O. Thorp, ``{The Kelly Criterion in Blackjack Sports Betting, and The Stock
  Market},'' {\em Handbook of Asset and Liability Management: Theory and
  Methodology}, vol.~1, p.~385, 2006.

\bibitem{kahneman1979prospect}
D.~Kahneman and A.~Tversky, ``{Prospect Theory: An Analysis of Decision under
  Risk},'' {\em Econometrica}, vol.~47, no.~2, pp.~263--292, 1979.

\bibitem{kuhn2010analysis}
D.~Kuhn and D.~G. Luenberger, ``{Analysis of the Rebalancing Frequency in
  Log-Optimal Portfolio Selection},'' {\em Quantitative Finance}, vol.~10,
  no.~2, pp.~221--234, 2010.

\bibitem{wu2020analysis}
M.-E. Wu, H.-H. Tsai, W.-H. Chung, and C.-M. Chen, ``{Analysis of Kelly Betting
  on Finite Repeated Games},'' {\em Applied Mathematics and Computation},
  vol.~373, p.~125028, 2020.

\bibitem{luenberger1993preference}
D.~G. Luenberger, ``{A Preference Foundation for Log Mean-Variance Criteria in
  Portfolio Choice Problems},'' {\em Journal of Economic Dynamics and Control},
  vol.~17, no.~5-6, pp.~887--906, 1993.

\bibitem{luenberger2013investment}
D.~G. Luenberger, {\em {Investment Science}}.
\newblock Oxford university press, 2013.

\bibitem{algoet1988asymptotic}
P.~H. Algoet and T.~M. Cover, ``{Asymptotic Optimality and Asymptotic
  Equipartition Properties of Log-Optimum Investment},'' {\em The Annals of
  Probability}, vol.~16, no.~2, pp.~876--898, 1988.

\bibitem{cover2006elements}
T.~M. Cover and J.~A. Thomas, {\em {Elements of Information Theory}}.
\newblock Wiley-Interscience, 2006.

\bibitem{hsieh2020necessary}
C.-H. Hsieh, ``{Necessary and Sufficient Conditions for Frequency-Based Kelly
  Optimal Portfolio},'' {\em IEEE Control Systems Letters}, vol.~5, no.~1,
  pp.~349--354, 2020.

\bibitem{hsieh2023asymptotic}
C.-H. Hsieh, ``{On Asymptotic Log-Optimal Portfolio Optimization},'' {\em
  Automatica}, vol.~151, p.~110901, 2023.

\bibitem{wong2023frequency}
Y.-S. Wong and C.-H. Hsieh, ``{On Frequency-Based Optimal Portfolio with
  Transaction Costs},'' {\em IEEE Control Systems Letters}, 2023.

\bibitem{proskurnikov2023benefit}
A.~V. Proskurnikov and B.~R. Barmish, ``{On the Benefit of Nonlinear Control
  for Robust Logarithmic Growth: Coin Flipping Games as a Demonstration
  Case},'' {\em IEEE Control Systems Letters}, 2023.

\bibitem{maclean2011kelly}
L.~C. MacLean, E.~O. Thorp, and W.~T. Ziemba, {\em {The Kelly Capital Growth
  Investment Criterion: Theory and Practice}}.
\newblock World Scientific, 2011.

\bibitem{hsieh2019contributions}
C.-H. Hsieh, {\em {Contributions to the Theory of Kelly Betting with
  Applications to Stock Trading: A Control-Theoretic Approach}}.
\newblock PhD thesis, The University of Wisconsin-Madison, 2019.

\bibitem{duchi2019variance}
J.~Duchi and H.~Namkoong, ``{Variance-Based Regularization with Convex
  Objectives},'' {\em Journal of Machine Learning Research}, vol.~20, no.~68,
  pp.~1--55, 2019.

\bibitem{jorion2007value}
P.~Jorion, {\em {Value at Risk: the New Benchmark for Managing Financial
  Risk}}.
\newblock McGraw-Hill, 2007.

\bibitem{eccles2014impact}
R.~G. Eccles, I.~Ioannou, and G.~Serafeim, ``{The Impact of Corporate
  Sustainability on Organizational Processes and Performance},'' {\em
  Management Science}, vol.~60, no.~11, pp.~2835--2857, 2014.

\bibitem{gubner2006probability}
J.~A. Gubner, {\em {Probability and Random Processes for Electrical and
  Computer Engineers}}.
\newblock Cambridge University Press, 2006.

\end{thebibliography}

\end{document}